\newcolumntype{L}{>{$}l<{$}} 
\theoremstyle{plain}
\newtheorem{theorem}{Theorem}[section]
\newtheorem{proposition}[theorem]{Proposition}
\newtheorem{corollary}[theorem]{Corollary}
\newtheorem{lemma}[theorem]{Lemma}
\newtheorem{conjecture}[theorem]{Conjecture}
\newtheorem*{theorem*}{Theorem}
\newtheorem*{proposition*}{Proposition}
\newtheorem*{corollary*}{Corollary}
\newtheorem*{lemma*}{Lemma}
\newtheorem*{conjecture*}{Conjecture}
\newtheorem*{thmA}{Theorem A}
\newtheorem*{thmB}{Theorem B}
\theoremstyle{definition}
\newtheorem{definition}[theorem]{Definition}
\newtheorem{example}[theorem]{Example}
\newtheorem*{definition*}{Definition}
\newtheorem*{example*}{Example}
\theoremstyle{remark}
\newtheorem{remark}[theorem]{Remark}
\newtheorem*{remark*}{Remark}
\newcommand{\HH}{\operatorname{H}}
\newcommand{\G}{\mathrm{G}}
\newcommand{\Fl}{\mathrm{Fl}}
\newcommand{\bQ}{\mathbb{Q}}
\newcommand{\bP}{\mathbb{P}}
\newcommand{\cO}{\mathcal{O}}
\newcommand{\cU}{\mathcal{U}}
\newcommand{\cQ}{\mathcal{Q}}
\newcommand{\tV}{\tilde{V}}
\newcommand{\Pic}{\operatorname{Pic}}
\newcommand{\Gr}{\operatorname{G}}
\newcommand{\Bl}{\operatorname{Bl}}
\renewcommand{\implies}{\Rightarrow}
\renewcommand\bar\overline
\renewcommand\tilde\widetilde
\newcommand{\Sym}{\operatorname{Sym}}
\renewcommand{\geq}{\geqslant}
\renewcommand{\leq}{\leqslant}
\newcommand{\PP}{\bP}
\newcommand{\OO}{\mathcal{O}}
\newcommand{\QQ}{\mathbf{Q}}
\newcommand{\ZZ}{\mathbf{Z}}
\newcommand{\CC}{\mathbf{C}}
\newcommand{\irr}{\operatorname{irr}}
\title{On some invariants of cubic fourfolds}
\author{Frank Gounelas}
\address{Georg-August-Universit\"at G\"ottingen, Fakult\"at f\"ur Mathematik und Informatik, Bunsenstr. 3-5, 37073 G\"ottingen, Germany}
\email{gounelas@mathematik.uni-goettingen.de}
\author{Alexis Kouvidakis}
\address{Dept. of Mathematics and Applied Mathematics, University of Crete, 70013 Heraklion, Greece.}
\email{kouvid@uoc.gr}
\date{\today}
\subjclass[2020]{14J70, 14J35, 14M15}
\begin{document}

\maketitle
\begin{abstract}
For a general cubic fourfold $X\subset\PP^5$ with Fano variety $F$, we compute the Hodge numbers of the locus $S\subset F$ of lines of
second type and the class of the locus $V\subset F$ of triple lines, using the description of the latter in terms of flag varieties.
We also give an upper bound of 6 for the degree of irrationality of the Fano scheme of lines of any smooth cubic
hypersurface.
\end{abstract}

\setcounter{tocdepth}{1}
\tableofcontents

\section{Introduction}\label{sec:introduction}

Let $X\subset\PP^5_\CC$ be a general cubic fourfold and $F=F(X)\subset\G(2,6)$ its Fano scheme of lines, which is a
four-dimensional hyperk\"ahler variety. The normal bundle of a line $\ell\in F$ decomposes as one of the following two
\begin{align*}
    N_{\ell/X}\cong\OO(1)\oplus\OO^2\text{, or } \OO(1)^2\oplus\OO(-1)
\end{align*}
and $\ell$ is called of first or second type respectively. The locus of
second type lines is a smooth projective irreducible surface $S\subset F$ which has drawn considerable interest since
the landmark paper \cite{cg} of Clemens--Griffiths. The aim of this paper is to study some invariants of $S$ and $F$.

In Section \ref{sec:background} we summarise what is known about $S$ and $F$ and fix notation. In Section
\ref{sec:second type surface} we use Amerik's description of the second type locus $S$ as the degeneracy locus of the
universal Gauss map 
\begin{align*}
f:\Sym^2\cU_F&\to\cQ^\vee_F\\
S=D_2(f)&\subset F,
\end{align*}
the Harris--Tu formula as well as Borel--Bott--Weil computations on the Grassmannian from Section \ref{sec:grass} to
compute the Hodge numbers of $S$.

\begin{thmA}
    If $X\subset\PP^5$ is a general cubic then the second type locus $S\subset F$ is a smooth irreducible surface whose
    Hodge numbers are as follows
    \begin{align*}
        h^{1,0}&=q=0, \\
        h^{2,0}&=p_g=449, \\
        h^{1,1}&=1665,
    \end{align*}
    whereas $\pi_1(S,s)$ contains a non-trivial element of order 2.
\end{thmA}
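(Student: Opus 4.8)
The plan is to run the degeneracy-locus machinery attached to Amerik's presentation $S=D_2(f)$ and extract the numerical and topological invariants in four stages. First I would settle smoothness and irreducibility. Since $\Sym^2\cU_F$ has rank $3$ and $\cQ^\vee_F$ has rank $4$, the locus $D_2(f)$ where $f$ drops rank by one has expected codimension $(3-2)(4-2)=2$ in the fourfold $F$, so its expected dimension is $2$; simultaneously the deeper locus $D_1(f)$ has expected codimension $(3-1)(4-1)=6>4$ and should be empty for general $X$. Verifying these two genericity facts gives that $S$ is smooth of pure dimension $2$, and irreducibility then follows from the connectedness theorems for degeneracy loci (Fulton--Lazarsfeld), or one simply cites Amerik.

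Second, I would compute the Chern numbers $c_1(S)^2=K_S^2$ and $c_2(S)=e(S)$. The Harris--Tu formula expresses the total Chern class of $S=D_2(f)$ as a universal polynomial in the Chern classes of $\Sym^2\cU_F$ and $\cQ^\vee_F$. To turn these classes into integers I would use that $F\subset\G(2,6)$ is the zero scheme of a regular section of $\Sym^3\cU^\vee$, so that $[F]=c_4(\Sym^3\cU^\vee)$ and every intersection number on $F$ reduces to one on $\G(2,6)$, evaluated by Schubert calculus. Feeding $K_S^2$ and $e(S)$ into Noether's formula yields $\chi(\cO_S)=(K_S^2+e(S))/12$.

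Third, the step needed to separate $q$ and $p_g$ (since $\chi(\cO_S)=1-q+p_g$ alone does not): I would compute $H^\bullet(S,\cO_S)$ directly and show $q=h^1(\cO_S)=0$. Because $S$ is determinantal of the expected codimension, $\cO_S$ admits an Eagon--Northcott resolution over $F$ by Schur functors of $\Sym^2\cU_F$ and $\cQ_F$; resolving $\cO_F$ in turn by the Koszul complex of the section cutting out $F$ reduces every cohomology group to the cohomology of homogeneous bundles on $\G(2,6)$, which the Borel--Bott--Weil computations of Section~\ref{sec:grass} evaluate. This gives $q=0$, hence $p_g=\chi(\cO_S)-1=449$, and then $e(S)=2-4q+2p_g+h^{1,1}$ forces $h^{1,1}=1665$.

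The main obstacle is the last assertion, that $\pi_1(S,s)$ contains a non-trivial element of order $2$; this is genuinely surprising, since a naive Lefschetz expectation (recall $\pi_1(F)=1$) would predict $S$ simply connected. The mechanism I would look for is an Enriques-type picture: a canonical fixed-point-free involution on a connected double cover $\tilde S\to S$, arising because the degenerating Gauss map $f$ singles out, over each second-type line, data defined only up to a natural involution (for instance an unordered pair of distinguished directions or focal points). A free involution produces an \'etale cover, hence a surjection $\pi_1(S)\twoheadrightarrow\bZ/2$ and, once $\tilde S$ is shown to be simply connected (or the resulting group extension is shown to split), an element of order $2$ in $\pi_1(S)$. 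If no such clean geometric cover is available, I would instead extract the $2$-torsion from $H_1(S,\bZ)$ through the same Eagon--Northcott resolution with $\bZ/2$-coefficients and then upgrade it to an order-$2$ element of $\pi_1$. Identifying the precise topological origin of this torsion, rather than merely its numerical shadow, is where the real difficulty lies.
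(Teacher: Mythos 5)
Your treatment of the Hodge numbers follows the paper's route essentially step for step: $S$ as the degeneracy locus $D_2(f)$ of $f:\Sym^2\cU_F\to\cQ_F^\vee$ with the expected codimension count, Harris--Tu for the Chern numbers (the paper applies it to the kernel and cokernel bundles $K$ and $C$ of $f|_S$ and recovers $c_2(N_{S/F})$ from $N_{S/F}=K^\vee\otimes C$, but this is the same machine), reduction of all intersection numbers to $\G(2,6)$ via $[F]=c_4(\Sym^3\cU^\vee)$, Noether's formula for $\chi(\OO_S)=450$, and then the key step of killing $q$ by resolving $I_S$ over $F$ and $\OO_F$ over the Grassmannian and invoking Borel--Weil--Bott. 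The only cosmetic difference is that the paper uses Amerik's explicit two-term resolution $0\to\Sym^2\cU_F(-2H)\to\cQ_F^\vee(-2H)\to I_S\to0$ rather than a general Eagon--Northcott complex; either works and both reduce to the same $\G(2,6)$ cohomology groups. Your bookkeeping ($\chi=1-q+p_g$, $e=2-4q+2p_g+h^{1,1}$) is correct.

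The genuine gap is in the last assertion. Your primary mechanism --- a connected \'etale double cover coming from the Gauss map singling out an unordered pair of points on each second-type line --- is indeed exactly Huybrechts's construction, which the paper cites for the order-$2$ refinement; but as stated you have no proof that this cover is \emph{connected}, equivalently that the associated $2$-torsion class is nonzero, and that is the whole content of the claim. The paper detects the nontriviality by a cohomological trick you do not have: the same twisted resolution, now with $t=1$, gives $h^3(F,I_S(3H))=0$ and hence $h^2(S,\OO_S(3H_S))=0$, while Serre duality forces $h^2(S,K_S)=h^0(S,\OO_S)=1$; since $K_S$ and $3H_S$ agree in $\HH^2(S,\QQ)$ but have different $h^2$, their difference is a nonzero torsion class in $\Pic S=\mathrm{NS}(S)$ (using $q=0$), whence $\pi_1(S)\neq 1$. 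Your fallback of extracting $2$-torsion in $H_1(S,\ZZ)$ ``through the same Eagon--Northcott resolution with $\ZZ/2$-coefficients'' is not viable: those resolutions compute coherent sheaf cohomology and say nothing about singular cohomology with finite coefficients. So either you must prove connectedness of the geometric cover directly, or you need an argument of the paper's type producing a specific nonzero torsion line bundle.
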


The order of the torsion element in the above theorem was pointed out to us by Huybrechts (see Remark
\ref{rem:2-torsion}), who also independently calculated the above invariants in his lecture notes on cubic
hypersurfaces, although our approach using Borel--Bott--Weil directly on $F$ leads to a more refined analysis of the
projective embedding $S$ in the Pl\"ucker space. 

In the final Section \ref{sec:irr} we extend results from \cite{irrgk} to prove the following
\begin{thmB}
Let $X\subset\PP^{n+1}$ be a smooth cubic hypersurface and $F(X)$ its Fano scheme 
of lines. Then degree of irrationality of $F(X)$, i.e., the minimal degree of a dominant, generically finite, rational
map to $\PP^{2(n-2)}$, satisfies
\[\irr(F(X))\leq 6.\]
\end{thmB}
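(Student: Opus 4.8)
The plan is to fibre $F(X)$ over a cubic hypersurface of one dimension lower and bound $\irr(F(X))$ by a product of a fibre contribution and a base contribution, each controlled by the classical geometry of lines on cubics. Write $X\subset\PP^{n+1}$ for the cubic and fix a general hyperplane $H$, so that $Y=X\cap H\subset\PP^n$ is a smooth cubic $(n-1)$-fold. A general line $\ell\subset X$ meets $H$ in a single point, giving a rational map $\rho\colon F(X)\dashrightarrow Y$, $\ell\mapsto\ell\cap H$. Its fibre over a general $y\in Y$ is the variety of lines of $X$ through $y$, which sits inside $\PP(T_yX)\cong\PP^{n-1}$ as the complete intersection $C_y=Q_y\cap V(f_3)$ of the (restricted) second-order term $Q_y=\{f_2=0\}$, a smooth quadric, with the cubic $f_3$. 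Thus $C_y$ has dimension $(n-1)-2=n-3=\dim F(X)-\dim Y$ and degree $6$, so $\rho$ is dominant with the expected fibre dimension.

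For $n=3$ the fibre $C_y$ consists of the six lines of $X$ through $y$, so $\rho$ is generically finite of degree $6$ onto the rational surface $Y$; since $\irr(Y)=1$ this gives $\irr(F(X))\le 6$, recovering \cite{irrgk}. For $n\ge 4$ the quadric $Q_y$ is smooth of dimension $\ge 2$ and hence contains lines, and the idea is to cut the positive-dimensional fibre down by a relative degree-$3$ projection. Choosing a line $m_y\subset Q_y$, project $\PP^{n-1}\dashrightarrow\PP^{n-3}$ from $m_y$: a general plane $\Pi\supset m_y$ meets $Q_y$ in $m_y\cup m_y'$, so $\Pi\cap C_y$ consists of the three points $m_y\cap V(f_3)$, which are contracted, together with the three residual points $m_y'\cap V(f_3)$; hence this restricts to a degree-$3$ map $C_y\dashrightarrow\PP^{n-3}$. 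Assembling the $m_y$ into an algebraic family over $Y$ turns these fibrewise projections into a single dominant map $F(X)\dashrightarrow Y\times\PP^{n-3}$ of degree $3$. Finally, projection of $Y$ from a point $y_0\in Y$ is the standard degree-$2$ parametrisation, so $\irr(Y)\le 2$ and therefore $\irr(Y\times\PP^{n-3})\le\irr(Y)\le 2$. Combining the two steps gives $\irr(F(X))\le 3\cdot\irr(Y\times\PP^{n-3})\le 6$.

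The main obstacle is the coherent, global construction of the relative degree-$3$ map: the lines $m_y\subset Q_y$ form the relative orthogonal Grassmannian bundle over $Y$, which need not admit a rational section, so some care is needed to produce a genuine rational map $F(X)\dashrightarrow Y\times\PP^{n-3}$. I would handle this either by passing to the universal such line (which enlarges the source only by a projective bundle, hence leaves $\irr$ unchanged) or by letting the projection centre $m_y$ vary through an auxiliary generic pencil so that it depends algebraically on $y$. A secondary, more routine point is a general-position analysis guaranteeing that for general $y$ the complete intersection $C_y$ is smooth, that $m_y'\neq m_y$, and that no component of a fibre is blown down, so that the degree of the composite map is exactly $2\cdot 3=6$ rather than dropping or jumping. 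Once the relative map is in place, the final bound is the elementary multiplicativity estimate $\irr(A)\le d\cdot\irr(B)$ for a dominant degree-$d$ map $A\dashrightarrow B$, applied twice.
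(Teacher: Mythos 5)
Your overall strategy coincides with the paper's first step: both arguments fibre $F(X)$ over a hyperplane section $Y=X\cap H$ via $\ell\mapsto\ell\cap H$, with fibre over $q$ the $(n-3)$-dimensional, degree-$6$ family of lines through $q$ (your $n=3$ case is correct and is essentially the construction of \cite{irrgk}). The divergence, and the genuine gap, lies in how you resolve the positive-dimensional fibres for $n\geq4$. Your degree-$3$ map $C_y\dashrightarrow\PP^{n-3}$ requires projecting from a line $m_y$ lying \emph{on} the quadric $Q_y$, and $Q_y$ varies with $y$; so you need $y\mapsto m_y$ to be a rational section of the relative Fano scheme of lines of the quadric bundle $\{Q_y\}_{y\in Y}$. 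This is obstructed in general: already for $n=4$ each $Q_y$ is a quadric surface whose lines form two disjoint rulings, so the relative line space is a $\PP^1$-fibration over a (typically non-split) double cover of $Y$, and there is no reason for a rational section to exist. Neither of your proposed fixes repairs this. The universal line space is not a projective bundle over $Y$, and, more importantly, base-changing the \emph{source} to $F(X)\times_Y\tilde{Y}$ produces a variety of strictly larger dimension that dominates $F(X)$ with positive-dimensional fibres; a generically finite map out of that variety gives no bound on $\irr(F(X))$, since the definition requires a map out of $F(X)$ itself. The ``generic pencil'' suggestion is not a construction of a line inside each $Q_y$. A further warning sign: if your step did go through, you could take $Y$ to be a one-nodal (hence rational) hyperplane section and conclude $\irr(F(X))\leq 3$, a much stronger statement than the one being proved.

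The paper sidesteps exactly this issue by using a projection centre that is fixed in the ambient $\PP^{n+1}$ rather than adapted to the fibre: it fixes a general $\Pi\cong\PP^2$ and $R\cong\PP^{n-3}$ and sends $\ell\mapsto\bigl(R\cap\langle\ell,\Pi\rangle,\ \ell\cap Y\bigr)$. On each fibre $F_q$, a degree-$6$ complete intersection of type $(1,1,2,3)$ in $\PP^{n+1}$, the first coordinate is the linear projection from the fixed-up-to-$q$ centre $\langle q,\Pi\rangle$, so it has degree $6$ rather than $3$; the lost factor of $2$ is recouped by taking $Y$ to be a hyperplane section with exactly one node, which is rational by projection from the node, so that the target $R\times Y$ is birational to $\PP^{2(n-2)}$. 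Your fibrewise degree count (the residual line $m_y'$ meeting $V(f_3)$ in three moving points) is correct as local geometry, but the global, rational-in-$y$ choice of $m_y$ that it needs is the missing, and essential, ingredient.
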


\textbf{Acknowledgements} We would like to thank Olof Bergvall, Daniel Huybrechts and Fabio Tanturri for helpful
correspondence, and Alexander Kuzne\-tsov for suggesting the construction in Section \ref{sec:V}. The first author was
partially supported by the ERC Consolidator Grant 681838 ``K3CRYSTAL''.

\section{Background and notation}\label{sec:background} 

As the notation surrounding cubic fourfolds is substantial, we devote this section to fixing that used in the
paper and recalling some basic properties, so that it acts as a reference for later sections.

For a vector bundle $E$ we denote by $\PP(E)=\mathrm{Proj}(\mathrm{Sym}(E^\vee))$, so that projective space parametrises one
dimensional subspaces. We denote by $\G(k,n)$ the space of $k$-dimensional subspaces of $\CC^n$, with universal bundle
$\cU$ of rank $k$ and universal quotient bundle $\cQ$ of rank $n-k$. We will denote by $\sigma_I$ the standard Schubert
cycles for an index $I$ so that, e.g., $\sigma_i={\mathrm c}_i(\cQ)$ for $i\geq1$.

Throughout, $X\subset\PP^5$ will be a smooth cubic fourfold with $H_X=\OO_{X}(1)$ and $F\subset \G(2,6)$ the Fano
scheme of lines contained in $X$ which is a hyperk\"ahler fourfold \cite{bd}. To unburden notation, we will often be
sloppy in distinguishing a line $\ell\subset X$ and the point $[\ell]\in F$ that it defines. We denote by $\cU_F,\cQ_F$
the restrictions of $\cU,\cQ$ to $F$. 

The subvariety $F\subset \G(2,6)$ is given by a section of the rank four bundle $\Sym^3\cU^*\cong q_*p^*\OO_{\PP^5}(3)$
where $p,q$ are the projections from the universal family $\G(2,6)\leftarrow I\to \PP^5$. In fact it is the section
induced, under this isomorphism, by $f\in k[x_0,\ldots,x_5]_3$ whose vanishing is $X$ (see \cite[Proposition 6.4]{3264})
and its cohomology class in the Grassmannian is given by ${\mathrm c}_4(\Sym^3\cU^*)$ which can be computed as follows
(see \cite[Example 14.7.13]{fulton})
\begin{align}\label{eq:class of F}
\begin{split}
    [F] &= 18{\mathrm c}_1(\cU^*)^2{\mathrm c}_2(\cU^*) + 9{\mathrm c}_2(\cU^*)^2 \\
        &= 18\sigma_1^2\sigma_{1,1}+9\sigma_{1,1}^2 \\
        &= 27\sigma_2^2-9\sigma_1\sigma_3-18\sigma_4.
\end{split}
\end{align}

Following \cite{cg}, there are two types of lines $\ell\in F$, depending on the decomposition of the normal bundle
$N_{\ell/X}$.

\begin{definition}
    We say that a line $\ell\subset X$ is 
    \begin{enumerate}
        \item of \textit{first type} if $N_{\ell/X}\cong\OO(1)\oplus\OO^2$,
        \item of \textit{second type} if $N_{\ell/X}\cong\OO(1)^2\oplus\OO(-1)$.
    \end{enumerate}
\end{definition}
An equivalent geometric description is as follows: $\ell$ is of
\begin{enumerate}
    \item first type if there is a unique $\Pi_\ell=\PP^2$ tangent to $X$ along $\ell$.
    \item second type if there is a family $\Pi_{\ell,t}=\PP^2$, $t\in\PP^1$, of $2$-planes tangent
    to $X$ along $\ell$.
\end{enumerate}
Denote by 
\begin{align*}
S&:=\{\ell : \ell\text{ is of second type}\}\subset F
\end{align*}
the \textit{locus of second type lines}. 

Denote by $H_F={\mathrm c}_1(\cU^\vee_F)$ the Pl\"ucker ample line bundle on $F$ and by $H_S$ the restriction on $S$.
The following is a combination of \cite[Lemma 1]{amerik}, \cite[\S 3]{osy}, \cite[Proposition 6.4.9]{huybrechts}.
\begin{theorem} \label{thm:amerik and osy}
    If $X\subset\PP^5$ is a cubic fourfold then $S$ is 2-dimensional and is the degeneracy locus of the Gauss map, i.e.,
    the following morphism of vector bundles
    \[ \Sym^2\cU_F \to \cQ_F^\vee.\]
    In particular ${\mathrm c}_1(K_S)=3H_S$ in $\HH^2(S,\bQ)$ and the class of $S$ in $\mathrm{CH}^2(F)$ is given by
    \[[S]=5({\mathrm c}_1(\cU_F^\vee)^2-{\mathrm c}_2(\cU_F^\vee))=5{\mathrm c}_2(\cQ_F)=5\sigma_2|_F.\]
    If $X$ is general, $S$ is a smooth projective irreducible surface.
\end{theorem}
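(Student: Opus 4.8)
The plan is to realise $S$ as the degeneracy locus of the second fundamental form of $X$ along its lines, and to read off the dimension, the class and the canonical bundle from the Thom--Porteous formalism, crucially using that $F$ is hyperk\"ahler.

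First I would identify the map $\Sym^2\cU_F\to\cQ_F^\vee$ geometrically. For $\ell\in F$ write $U=(\cU_F)_\ell\subset\CC^6$ and $Q=(\cQ_F)_\ell$, so that the normal sequence of $\ell\subset X\subset\PP^5$ reads $0\to N_{\ell/X}\to\OO_\ell(1)\otimes Q\to\OO_\ell(3)\to0$, the second map being cut out by the restriction to $\ell$ of the partials of the cubic form $f$. Twisting by $\OO_\ell(-1)$ exhibits this map as a constant element $\phi_\ell\in\Sym^2U^\vee\otimes Q^\vee=\Hom(\Sym^2U,Q^\vee)$, the mixed second-order part of $f$ along $\ell$, which is exactly the fibre of the Gauss map $\phi\colon\Sym^2\cU_F\to\cQ_F^\vee$. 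Since $N_{\ell/X}$ has rank $3$ and degree $1$ it is $\OO(1)\oplus\OO^2$ or $\OO(1)^2\oplus\OO(-1)$, and from the twisted sequence $\HH^0(N_{\ell/X}(-1))=\ker(\phi_\ell^\vee\colon Q\to\Sym^2U^\vee)$, of dimension $1$ in the first case and $2$ in the second. Hence $\ell$ is of second type iff $4-\rk\phi_\ell\geq2$, i.e. iff $\rk\phi_\ell\leq2$, so that $S=D_2(\phi)$; as a map from a rank $3$ to a rank $4$ bundle drops rank in expected codimension $(3-2)(4-2)=2$, we get $\dim S=2$.

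For the class I would apply Thom--Porteous to this corank-one locus: $[S]=c_2(\cQ_F^\vee-\Sym^2\cU_F)$. Writing $a,b$ for the Chern roots of $\cU_F^\vee$, so $a+b=\sigma_1$ and $ab=\sigma_{1,1}$, one has $c_1(\Sym^2\cU_F)=-3\sigma_1$, $c_2(\Sym^2\cU_F)=2\sigma_1^2+4\sigma_{1,1}$, $c_1(\cQ_F^\vee)=-\sigma_1$, $c_2(\cQ_F^\vee)=\sigma_2$, and a short computation gives $c_2(\cQ_F^\vee-\Sym^2\cU_F)=\sigma_2+4\sigma_1^2-4\sigma_{1,1}=5\sigma_2$ on using $\sigma_1^2=\sigma_2+\sigma_{1,1}$. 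As $\sigma_2|_F=c_2(\cQ_F)=c_1(\cU_F^\vee)^2-c_2(\cU_F^\vee)$, this is the asserted $[S]=5\sigma_2|_F$.

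The canonical bundle is where the real work lies. Since $F$ is hyperk\"ahler, $K_F=\OO_F$, so adjunction gives $K_S=\det N_{S/F}|_S$. For a smooth degeneracy locus of expected dimension the normal bundle is $N_{S/F}=\Hom(\ker\phi,\operatorname{coker}\phi)$, with $\ker\phi$ a line bundle and $\operatorname{coker}\phi$ of rank $2$; chasing $0\to\ker\phi\to\Sym^2\cU_F\to\cQ_F^\vee\to\operatorname{coker}\phi\to0$ over $S$ reduces $c_1(K_S)$ to $2H_S-c_1(\ker\phi)$. Everything therefore comes down to the identification $\ker\phi\cong\det\cU_F|_S$, equivalently $c_1(\ker\phi)=-H_S$; I expect this to be the main obstacle, as it pins the integral class of $K_S$ rather than merely the number $K_S\cdot H_S$. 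I would extract it from Amerik's explicit description of the kernel of the second fundamental form along a second-type line (the distinguished quadric in $\Sym^2U$ annihilating the pencil $\im\phi_\ell^\vee\subset\Sym^2U^\vee$), which globalises to the line-bundle identification and yields $c_1(K_S)=3H_S$ in $\HH^2(S,\bQ)$. Finally, for general $X$ I would prove smoothness by generic transversality: as the cubic varies the second fundamental form meets the rank stratification of $\Hom(\Sym^2\cU_F,\cQ_F^\vee)$ transversally, so $D_2(\phi)$ is smooth of expected dimension with empty rank-$\leq1$ locus. Irreducibility I would deduce from the desingularisation $Z=\{(\ell,\Lambda):\im\phi_\ell\subseteq\Lambda\}\subset\G(2,\cQ_F^\vee)$, which maps isomorphically to $S$ and is the zero locus of a section of $\pi^*\Sym^2\cU_F^\vee\otimes\cT$ on the Grassmann bundle; a Lefschetz-type connectivity argument for this zero locus, together with smoothness, gives that $S$ is irreducible.
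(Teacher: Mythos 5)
First, a point of comparison: the paper does not actually prove this theorem --- it is quoted as a combination of results of Amerik, Oberdieck--Shen--Yin and Huybrechts --- so your proposal is an independent argument rather than a variant of an in-paper proof. Much of it is sound: the identification of the Gauss map with the second arrow of the twisted normal bundle sequence, the characterisation of second-type lines by $h^0(N_{\ell/X}(-1))=2$ giving $S=D_2(\phi)$, and the Thom--Porteous computation $[S]=c_2(\cQ_F^\vee-\Sym^2\cU_F)=5\sigma_2|_F$ are all correct. But the step you yourself single out as the main obstacle is false as stated: there is no line bundle isomorphism $\ker\phi\cong\det\cU_F|_S$. If there were, you would get $K_S\cong\OO_S(3H_S)$ in $\Pic S$, whereas the paper shows in Section \ref{sec:second type surface} and Remark \ref{rem:2-torsion} (following Huybrechts) that $K_S-3H_S$ is a \emph{nonzero} $2$-torsion class; Sequence \eqref{seq:huybrechts} only gives $-2L=2H_S$, i.e.\ $\ker\phi\otimes\OO(H_S)$ is nontrivial $2$-torsion. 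So no globalisation of Amerik's pointwise description of the kernel quadric can yield that isomorphism. What is true, and all the theorem claims, is the identity in $\HH^2(S,\QQ)$; one way to obtain it within your framework is via the Harris--Tu intersection numbers $c_1(K)^2=315$, $c_1(K)\cdot H_S=-315$, $H_S^2=315$, which give $(c_1(K)+H_S)^2=(c_1(K)+H_S)\cdot H_S=0$, so that Hodge index forces $c_1(K)+H_S$ to be numerically trivial, hence zero in $\HH^2(S,\QQ)$.

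The second gap is the dimension claim. For a nonempty degeneracy locus the expected codimension is only an upper bound on the codimension of each component, so "expected codimension $(3-2)(4-2)=2$" gives $\dim S\geq 2$, not $\dim S=2$; a priori $S$ could contain a divisor, in which case Thom--Porteous would not compute $[S]$ either. Ruling out divisorial components for \emph{every} smooth cubic (the theorem is not restricted to general $X$ here) requires the local analysis the paper points to in Friedman pp.~32--33 and Clemens--Griffiths; your transversality argument only treats general $X$. Relatedly, your irreducibility sketch implicitly needs a connectedness theorem for the degeneracy locus, but the Fulton--Lazarsfeld-type hypotheses fail: $\Sym^2\cU_F^\vee\otimes\cQ_F^\vee$ is not ample since $\cQ_F^\vee$ is a subbundle of a trivial bundle. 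The workable route --- and the one this paper in effect carries out in Section \ref{sec:second type surface} --- is to prove $h^0(S,\OO_S)=1$ from the vanishing of $\HH^i(F,I_S)$ for $i\leq 2$ via the Koszul resolution and Borel--Weil--Bott, and combine connectedness with smoothness for general $X$.
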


We motivate now the study of $S$. Consider the \textit{Voisin map} of \cite{voisinintrinsic} $\phi:F\dashrightarrow
F, \ell\mapsto\ell'$, taking a general line $\ell$ and giving the residual line $\ell'$ in the tangent 2-plane
$\Pi_\ell$ to $\ell$, i.e., $\Pi_\ell\cap X=2\ell+\ell'$. Note that this is not defined on $S$ nor on any lines
contained in a plane contained inside $X$. Containing a plane is a divisorial condition, so for $X$ outside this locus,
we can resolve this map with one blowup $\tilde{F}=\operatorname{Bl}_SF$ along the surface $S$. The map $\phi$ has been
used in various contexts (see, e.g., \cite{amerikvoisin, svfourier}), so it is important to understand its locus of
indeterminacy. See also \cite[\S 2, \S 6]{huybrechts} for further references and motivation.

As another example, \cite[Theorem 0.2]{osy} prove that if $X$ is very general then for every rational curve $C\in F$ of
class $\beta$, the generator of $\HH_2(X,\ZZ)^{\mathrm{alg}}$, there exists a unique $s\in S$ so that
$C=\phi(q^{-1}(s))$. In \cite{no} this is used to count the number of arithmetic genus $1$ curves of fixed general
$j$-invariant in $F$ of class $\beta$, and in \cite{gk21} to count the number of nodal rational curves of class $\beta$
respectively.

\section{Cohomology of $\G(2,6)$}\label{sec:grass}

This section contains some ancillary computations necessary for the next section. We briefly recall the necessary
notation for the Borel--Weil--Bott Theorem used to compute various cohomology groups of tautological bundles on the
Grassmannian $\G(2,6)$ with universal sub and quotient bundle $\cU,\cQ$ respectively. For a quick introduction we found
\cite[Appendix A]{bcp} and \cite{mukai} helpful, although a more thorough reference is \cite{weyman}. 

Denote by $\rho=(6,5,4,3,2,1)$, $w=(w',w'')\in\ZZ^2\oplus\ZZ^4$ respectively and $\Sigma_w$ the standard
Weyl module. If $w+\rho$ is \textit{regular}, i.e., all its components are distinct integers, then the BWB Theorem
states that
\[
\HH^{\ell(w)}(\G(2,6), \Sigma_{w'}\cU^* \otimes \Sigma_{w''}\cQ^*) \cong \Sigma_{\sigma(w+\rho)-\rho}\CC^6
\]
is the only non-trivial cohomology group of this sheaf. In the above, $\sigma$ is the unique element of the symmetric group
$S_6$ which permutes the components of $w+\rho$ so that they are non-increasing, i.e.,
$\sigma(w+\rho)=(\lambda_1,\ldots,\lambda_6)$ with $\lambda_1\geq\ldots\geq\lambda_6$, and $\ell(w)$ is defined as the
length of $\sigma$ in the sense of the number of transpositions of the form $(i\hspace{5pt} i+1)$ that $\sigma$
constitutes of. If on the other hand $w+\rho$ is not regular, then all cohomology groups are zero. 

We recall the formula, e.g., from \cite[Theorem 6.3]{fultonharris}, that if $\lambda=(\lambda_1,\ldots,\lambda_6)$
is such that $\lambda_1\geq\ldots\geq\lambda_6\geq1$ then
\[
\dim \Sigma_\lambda\CC^6 = \prod_{1\leq i<j\leq6} \frac{\lambda_i-\lambda_j+j-i}{j-i},
\]
whereas for an arbitrary non-increasing sequence $\lambda$, we may twist by some large weight (e.g.,
$(|\lambda_6|+1,\ldots,|\lambda_6|+1)$) to make all components positive - this has the effect of tensoring the
representation by a 1-dimensional one which does not change the dimension.

The first task is to decompose various tautological sheaves into irreducible representations. Here are some
examples of irreducible representations
\begin{align*}
    \Sigma_{(1,1)}\cU^* &\cong H \\
    \Sigma_{(1,-1)}\cU^* &\cong (\Sym^2\cU)(H) \\
    \Sigma_{(0,-1,-1,-1)}\cQ^* &\cong \cQ^*(1).
\end{align*}

\begin{proposition}\label{prop:coh grass}
    The non-zero cohomology groups of $\wedge^p\Sym^3\cU \otimes (\Sym^2\cU)(tH)$ and
    $\wedge^p \Sym^3\cU \otimes \cQ^*(tH)$ on $\G(2,6)$ for $t=1$ are 
    \begin{align*}
        \HH^4(\G(2,6), \wedge^2\Sym^3\cU \otimes \Sym^2\cU(H)) &\cong \CC^{36} \\
        \HH^0(\G(2,6), \cQ^*(H)) &\cong \CC^{20} \\
        \HH^5(\G(2,6), \wedge^3 \Sym^3\cU \otimes \cQ^*(H)) &\cong \CC
    \end{align*}
    whereas for $t=-2$ they are
    \begin{align*}
        \HH^8(\G(2,6), \wedge^3\Sym^3\cU \otimes \Sym^2\cU(-2H)) &\cong \CC^{126} \\
        \HH^8(\G(2,6), \wedge^4\Sym^3\cU \otimes \Sym^2\cU(-2H)) &\cong \CC^{1134} \\
        \HH^5(\G(2,6), \Sym^3\cU \otimes \cQ^*(-2H)) &\cong \CC\\
        \HH^8(\G(2,6), \wedge^4 \Sym^3\cU \otimes \cQ^*(-2H)) &\cong \CC^{560}.
    \end{align*}
\end{proposition}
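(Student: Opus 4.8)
The plan is to reduce everything to the Borel--Weil--Bott recipe of Section \ref{sec:grass}. Each sheaf in the statement is built from $\wedge^p\Sym^3\cU$ together with a twist, so the first step is to decompose $\wedge^p\Sym^3\cU$ into irreducible homogeneous bundles $\Sigma_{w'}\cU^*$. Once every sheaf is written as a direct sum of bundles of the form $\Sigma_{w'}\cU^*\otimes\Sigma_{w''}\cQ^*$, BWB computes the cohomology of each summand separately, and we add up the contributions. The two families behave slightly differently: for $\wedge^p\Sym^3\cU\otimes\Sym^2\cU(tH)$ both tensor factors live on the rank-$2$ bundle, so I must combine them by the $GL_2$ Clebsch--Gordan rule before applying BWB, whereas for $\wedge^p\Sym^3\cU\otimes\cQ^*(tH)$ the two factors sit in the separate blocks of the Levi $GL_2\times GL_4$ and the summands are already in BWB form.

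For the first step I would list the weights of $\Sym^3\cU$ and take exterior powers; this gives
\begin{align*}
\wedge^2\Sym^3\cU &= \Sigma_{(-1,-5)}\cU^*\oplus\Sigma_{(-3,-3)}\cU^*,\\
\wedge^3\Sym^3\cU &= \Sigma_{(-3,-6)}\cU^*,\\
\wedge^4\Sym^3\cU &= \Sigma_{(-6,-6)}\cU^*,
\end{align*}
together with $\Sym^3\cU=\Sigma_{(0,-3)}\cU^*$ and $\Sym^2\cU=\Sigma_{(0,-2)}\cU^*$. A twist by $tH=\Sigma_{(t,t)}\cU^*$ simply adds $(t,t)$ to the first weight, and in the $\cQ^*$ case I use $\cQ^*=\Sigma_{(1,0,0,0)}\cQ^*$ (equivalently the identity $H=\det\cU^*=\det\cQ$). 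In the $\Sym^2\cU$ family I then expand each $\Sigma_{(a_1,a_2)}\cU^*\otimes\Sigma_{(0,-2)}\cU^*$ into its three Clebsch--Gordan summands before adding the $(t,t)$ shift.

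Finally, for each resulting weight $w=(w',w'')\in\ZZ^2\oplus\ZZ^4$ I form $w+\rho$ with $\rho=(6,5,4,3,2,1)$, discard any summand for which $w+\rho$ is not regular (these are acyclic), and for the regular ones read off the cohomological degree as the number of inversions needed to sort $w+\rho$ into decreasing order, together with the dimension $\dim\Sigma_{\sigma(w+\rho)-\rho}\CC^6$ from the Weyl formula. Summing the surviving terms in each degree yields the stated groups; for instance $\wedge^2\Sym^3\cU\otimes\Sym^2\cU(H)$ produces four $\cU^*$-summands, two of which are acyclic and the other two contribute $\CC^{35}$ and $\CC$ in degree $4$, giving $\CC^{36}$. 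The main obstacle is purely bookkeeping: keeping track of all the twists, correctly identifying which of the many summands are regular, and verifying that the surviving contributions all concentrate in the single cohomological degree asserted in each line --- this concentration is the nontrivial output of the computation rather than something visible a priori.
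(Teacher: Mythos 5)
Your proposal is correct and follows essentially the same route as the paper: decompose each sheaf into irreducible homogeneous bundles $\Sigma_{w'}\cU^*\otimes\Sigma_{w''}\cQ^*$, then apply Borel--Weil--Bott summand by summand (checking regularity of $w+\rho$, reading off the length and the Weyl dimension), and your hand decompositions of $\wedge^p\Sym^3\cU$ and the subsequent Clebsch--Gordan expansions agree with the tables the paper generates via Macaulay2's SchurRing package. The only substantive difference is that you do the plethysm/tensor decompositions by hand rather than by computer, and you place the $tH$ twist on the $\cU^*$-weight where the paper sometimes absorbs it into the $\cQ^*$-weight --- both conventions give the same dimensions since $\det\cU^*\otimes\det\cQ^*$ is trivial.
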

\begin{proof}
    Using the following code in the SchurRing package of Macaulay2, 
    \begin{small}
    \begin{verbatim}
loadPackage "SchurRings";
S = schurRing(QQ,s,2);
for i from 0 to 4 do (
print (exteriorPower(i,symmetricPower(3,s_1))*symmetricPower(2,s_1));)
    \end{verbatim}
    \end{small}
    we compute the weights of the irreducible components of the representation $\wedge^p\Sym^3\cU \otimes \Sym^2\cU(H)$
    as follows
    \begin{small}
    \begin{center}
    \begin{tabular}{|L|L|L|L|}
        \hline
        p & w' & w+\rho=(w';0,0,0,0)+\rho & \ell(w) \\ 
        \hline
        0 & (1,-1) & (7,4,4,3,2,1) & -1 \\
        \hline
        1 & (1,-4)\oplus (0,-3) & (7,1,4,3,2,1)\oplus(6,2,4,3,2,1) & -1\oplus-1\\
          & \oplus (-1,-2) & \oplus(5,3,4,3,2,1) & \oplus-1 \\
        \hline
        2 & (0,-6)\oplus(-1,-5) & (6,-1,4,3,2,1)\oplus(5,0,4,3,2,1) & 4\oplus4\\
          & \oplus(-2,-4)^{\oplus2} & \oplus(4,1,4,3,2,1)^{\oplus2} & \oplus-1\oplus-1 \\
        \hline
        3 & (-2,-7)\oplus(-3,-6) &(4,-2,4,3,2,1)\oplus(3,-1,4,3,2,1) & -1\oplus-1\\
          & \oplus(-4,-5) & \oplus(2,0,4,3,2,1) & \oplus-1 \\
        \hline
        4 & (-5,-7) & (1,-2,4,3,2,1) & -1 \\
        \hline
    \end{tabular}
    \end{center}
    \end{small}
    since for example a decomposition into irreducibles for $p=2$ is
    \[
    \wedge^2\Sym^3\cU \otimes \Sym^2\cU(H) \cong \Sigma_{(0,-6)}\cU^* \oplus \Sigma_{(-1,-5)}\cU^* \oplus
    (\Sigma_{(-2,-4)}\cU^*)^{\oplus 2}.
    \]
    In the table, $\ell(w)=-1$ signifies that the weight $w$ is not regular. From the Borel--Weil--Bott Theorem, we obtain
    \[
        \HH^i(\G(2,6), \wedge^p\Sym^3\cU \otimes \Sym^2\cU(H))=0\text{ for }p=0,1,3,4\text{ and }i\geq0.
    \]
    For $p=2$, as 
    \begin{align*}
    \sigma((6,-1,4,3,2,1)+\rho)-\rho&=(0,-1,-1,-1,-1,-2),\\
    \sigma((5,0,4,3,2,1)+\rho)-\rho&=(-1,-1,-1,-1,-1,-1)
    \end{align*}
    we obtain
    \begin{align*}
    \HH^4(\G(2,6), \wedge^2\Sym^3\cU \otimes \Sym^2\cU(H)) \cong& \Sigma_{(0,-1,\ldots,-1,-2)}\CC^6\oplus
    \Sigma_{(-1,\ldots,-1)}\CC^6\\ 
    \cong& \CC^{35}\oplus\CC \cong \CC^{36}.
    \end{align*}
    Similarly, the table for $\wedge^p \Sym^3\cU \otimes \cQ^*(H)$ is as follows
    \begin{small}
    \begin{center}
    \begin{tabular}{|L|L|L|L|}
        \hline
        p & w' & w+\rho=(w',0,-1,-1,-1)+\rho & \ell(w) \\ 
        \hline
        0 & (0,0) & (6,5,4,2,1,0) & 0 \\
        \hline
        1 & (0,-3)& (6,2,4,2,1,0) & -1 \\
        \hline
        2 & (-1,-5)\oplus(-3,-3) & (5,0,4,2,1,0)\oplus(3,2,4,2,1,0) & -1\oplus-1\\
        \hline
        3 & (-3,-6) & (3,-1,4,2,1,0) & 5\\
        \hline
        4 & (-6,-6) & (0,-1,4,2,1,0) & -1 \\
        \hline
    \end{tabular}
    \end{center}
    \end{small}
    so the only non-zero cohomology groups occur for $p=0,3$. Using the same formulas as above we compute
    \begin{align*}
        \HH^0(\G(2,6), \cQ^*(H)) \cong& \Sigma_{(0,0,0,-1,-1,-1)}\CC^6 \cong \CC^{20}\\
        \HH^5(\G(2,6), \wedge^3 \Sym^3\cU \otimes \cQ^*(H)) \cong& \Sigma_{(-2,-2,-2,-2,-2,-2)}\CC\cong\CC.
    \end{align*}
    The table for $\wedge^p\Sym^3\cU\otimes(\Sym^2\cU)(-2H)$ is as follows.
    \begin{small}
    \begin{center}
    \begin{tabular}{|L|L|L|L|}
        \hline
        p & w' & w+\rho=(w'+(6,5),4,3,2,1) & \ell(w) \\ 
        \hline
        0 & (-2,-4) & (4,1,4,3,2,1) & -1 \\
        \hline
        1 & (-2,-7)\oplus (-3,-6) & (4,-2,4,3,2,1)\oplus(3,-1,4,3,2,1) & -1\oplus-1\\
          & \oplus (-4,-5) & \oplus(2,0,4,3,2,1) & \oplus-1 \\
        \hline
        2 & (-3,-9)\oplus(-4,-8) & (3,-4,4,3,2,1)\oplus(2,-3,4,3,2,1) & -1\oplus-1\\
          & \oplus(-5,-7)^{\oplus2} & \oplus(1,-2,4,3,2,1)^{\oplus2} & \oplus-1\oplus-1 \\
        \hline
        3 & (-5,-10)\oplus(-6,-9) &(1,-5,4,3,2,1)\oplus(0,-4,4,3,2,1) & -1\oplus8\\
          & \oplus(-7,-8) & \oplus(-1,-3,4,3,2,1) & \oplus8 \\
        \hline
        4 & (-8,-10) & (-2,-5,4,3,2,1) & 8 \\
        \hline
    \end{tabular}
    \end{center}
    \end{small}
    giving
    \begin{align*}
        \HH^8(\wedge^3\Sym^3\cU\otimes(\Sym^2\cU)(-2H)) \cong&
        \Sigma_{(-2,\ldots,-2,-5)}\CC^6\oplus\Sigma_{(-2,\ldots,-2,-3,-4)}\CC^6 \\
         \cong& \CC^{56}\oplus\CC^{70} \cong \CC^{126} \\
        \HH^8(\wedge^4\Sym^3\cU\otimes(\Sym^2\cU)(-2H)) \cong& \Sigma_{(-2,-2,-2,-2,-4,-6)}\CC^6 \cong \CC^{1134}.
    \end{align*}
    Similarly, the table for $\wedge^p \Sym^3\cU \otimes \cQ^*(-2H)$ is as follows, noting that
    $\cQ^*(-2H)\cong\Sigma_{(3,2,2,2)}\cQ^*$.
    \begin{small}
    \begin{center}
    \begin{tabular}{|L|L|L|L|}
        \hline
        p & w' & w+\delta=(w',3,2,2,2)+\rho & \ell(w) \\ 
        \hline
        0 & (0,0) & (6,5,7,5,4,3) & -1 \\
        \hline
        1 & (0,-3)& (6,2,7,5,4,3) & 5 \\
        \hline
        2 & (-1,-5)\oplus(-3,-3) & (5,0,7,5,4,3)\oplus(3,2,7,5,4,3) & -1\oplus-1\\
        \hline
        3 & (-3,-6) & (3,-1,7,5,4,3) & -1\\
        \hline
        4 & (-6,-6) & (0,-1,7,5,4,3) & 8 \\
        \hline
    \end{tabular}
    \end{center}
    \end{small}
    giving 
    \begin{align*}
        \HH^5(\Sym^3\cU \otimes \cQ^*(-2H)) &\cong \Sigma_{(1,1,1,1,1,1)}\CC^6\cong\CC\\
        \HH^8(\wedge^4\Sym^3\cU \otimes \cQ^*(-2H)) &\cong \Sigma_{(1,0,0,0,-2,-2)}\CC^6\cong\CC^{560}.\qedhere
    \end{align*}
\end{proof}

\section{Hodge numbers of $S$}\label{sec:second type surface} 

In Theorem \ref{thm:amerik and osy}, we described how $S$ is given as the degeneracy locus of the map
\begin{align*}
    f:\Sym^2\cU_F\to \cQ_F^\vee.
\end{align*}
Restricting to $S$ we thus have the following sequence of vector bundles 
\begin{align}\label{seq:amerik on S}
0\to K\to \Sym^2\cU_S\xrightarrow{f|_S} \cQ_S^\vee\to C\to 0
\end{align}
where $K$ is a line bundle and $C$ of rank 2. Note that there is a formula for the normal bundle of a degeneracy locus
in \cite[\S 3]{harristu} giving
\[N_{S/F} = K^\vee\otimes C.\]

The map $f$ is generically injective when considered on $F$, hence injective and Amerik \cite[\S 2]{amerik} has
constructed the following resolution of the ideal sheaf $I_S$ of $S\subset F$
\begin{align}\label{eq:resolution IS}
    0\to \Sym^2\cU_F(-2H)\to&\cQ^\vee_F(-2H)\to I_S\to0.
\end{align}
A short explanation is in order concerning the above. The cokernel of $f$ is torsion-free by noting that the
degeneracy locus $S$ does not have any divisorial components (see the local computations of \cite[p.32-33]{friedman}).
From this one obtains $\operatorname{coker}(f)=M\otimes I_S$ for some line bundle $M$, and an Euler characteristic
computation in \cite{amerik} gives $M=2H$.

\begin{proposition}
    For $S$ the surface parametrising lines of second type on a cubic fourfold $X$ we have
    \begin{enumerate}
        \item $K_S^2=2835$,
        \item $\chi(\OO_S)=450$.
    \end{enumerate}
\end{proposition}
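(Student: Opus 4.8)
Theorem \ref{thm:amerik and osy} records $c_1(K_S) = 3H_S$, so $K_S^2 = 9H_S^2$ and it is enough to compute the single number $H_S^2$. The plan is to push the computation onto the Grassmannian. Writing $H_F = \sigma_1|_F$ and using the class $[S] = 5\sigma_2|_F \in \CH^2(F)$ from the same theorem, the projection formula gives
\[ H_S^2 = \int_F H_F^2\cdot[S] = 5\int_{\G(2,6)} \sigma_1^2\,\sigma_2\cdot[F], \qquad [F] = 27\sigma_2^2 - 9\sigma_1\sigma_3 - 18\sigma_4, \]
the latter from \eqref{eq:class of F}. What remains is a routine Schubert-calculus computation in $\HH^\ast(\G(2,6))$: expanding by Pieri into the codimension-$4$ classes one finds $\sigma_1^2\sigma_2 = \sigma_4 + 2\sigma_{3,1} + \sigma_{2,2}$ and $[F] = 18\sigma_{3,1} + 27\sigma_{2,2}$, and since $\sigma_4,\sigma_{3,1},\sigma_{2,2}$ are self-complementary in the $2\times4$ box (so $\int\sigma_4^2 = \int\sigma_{3,1}^2 = \int\sigma_{2,2}^2 = 1$ while all mixed products vanish) the integral equals $2\cdot 18 + 1\cdot 27 = 63$. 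Hence $H_S^2 = 315$ and $K_S^2 = 2835$.

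\textbf{Part (2).} For $\chi(\OO_S)$ I would start from the structure sequence $0\to I_S \to \OO_F \to \OO_S \to 0$, reducing to $\chi(\OO_S) = \chi(\OO_F) - \chi(I_S)$. Since $F$ is a hyperk\"ahler fourfold of $K3^{[2]}$-type \cite{bd}, the only nonzero Hodge numbers $h^{0,q}$ are $h^{0,0} = h^{0,2} = h^{0,4} = 1$, giving $\chi(\OO_F) = 3$. For the ideal sheaf I would feed the Amerik resolution \eqref{eq:resolution IS} into the Euler characteristic:
\[ \chi(I_S) = \chi\big(\cQ_F^\vee(-2H)\big) - \chi\big(\Sym^2\cU_F(-2H)\big). \]

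It then remains to evaluate these two Euler characteristics of tautological bundles on $F$, and the plan is to transfer them to $\G(2,6)$. Because $F$ is the zero locus of a regular section of the rank-$4$ bundle $\Sym^3\cU^\vee$, the Koszul complex yields, for any bundle $G$ on $\G(2,6)$,
\[ \chi(F, G|_F) = \sum_{p=0}^4 (-1)^p\,\chi\big(\G(2,6),\, \wedge^p\Sym^3\cU\otimes G\big). \]
Taking $G = \Sym^2\cU(-2H)$ and $G = \cQ^\vee(-2H)$, each summand on the right is governed by Proposition \ref{prop:coh grass} at $t=-2$, where for each $p$ there is at most one nonzero cohomology group, concentrated in a single degree. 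Assembling the alternating sums gives $\chi(\Sym^2\cU_F(-2H)) = 1134 - 126 = 1008$ and $\chi(\cQ_F^\vee(-2H)) = 560 + 1 = 561$, so $\chi(I_S) = 561 - 1008 = -447$ and $\chi(\OO_S) = 3 - (-447) = 450$.

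Both parts are essentially bookkeeping once Proposition \ref{prop:coh grass} and the class \eqref{eq:class of F} are in hand, so I do not anticipate a genuine obstacle. The two points demanding care are the parity bookkeeping in the Koszul alternating sum --- each nonzero group in Proposition \ref{prop:coh grass} lives in a single cohomological degree, whose parity must be combined correctly with the sign $(-1)^p$ --- and the input $\chi(\OO_F) = 3$, which rests on the Beauville--Donagi identification of $F$ with a deformation of the Hilbert square of a K3 surface.
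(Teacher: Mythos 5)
Your proposal is correct, and the two parts relate to the paper's proof differently. Part (1) is essentially the paper's argument: the paper quotes $H_S^2=315$ from Theorem \ref{thm:amerik and osy}, and you simply carry out the underlying Schubert calculus; your expansions $\sigma_1^2\sigma_2=\sigma_4+2\sigma_{3,1}+\sigma_{2,2}$ and $[F]=18\sigma_{3,1}+27\sigma_{2,2}$, and the resulting $5\cdot 63=315$, all check out. Part (2) is a genuinely different route. The paper computes the Chern numbers of the kernel and cokernel of $\Sym^2\cU_F\to\cQ_F^\vee$ via the Harris--Tu/Pragacz formulas, deduces ${\mathrm c}_2(N_{S/F})$ and then ${\mathrm c}_2(T_S)=2565$, and concludes with Noether's formula. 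You instead take Euler characteristics along Amerik's resolution \eqref{eq:resolution IS} and along the Koszul complex of $F\subset\G(2,6)$, feeding in Proposition \ref{prop:coh grass} at $t=-2$; your bookkeeping $\chi(\Sym^2\cU_F(-2H))=1134-126=1008$, $\chi(\cQ_F^\vee(-2H))=560+1=561$, $\chi(I_S)=-447$, $\chi(\OO_S)=3+447=450$ is correct, and agrees with the dimensions $1008$ and $561$ that the paper later extracts from the same Borel--Weil--Bott data via spectral sequences. The trade-off: your route is leaner and reuses computations the paper needs anyway for the Hodge numbers (it is essentially the Euler-characteristic shortcut the authors allude to in the Remark following the proposition, there phrased via Huybrechts' sequence \eqref{seq:huybrechts}), but it produces only $\chi(\OO_S)$; the Harris--Tu computation additionally yields $\chi_{\mathrm{top}}(S)={\mathrm c}_2(T_S)=2565$, which the paper needs afterwards to determine $b_2$ and hence $h^{1,1}=1665$, so in the context of the whole paper that extra work is not redundant. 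Your argument also serves as an independent cross-check of the Harris--Tu computation, which is welcome given the typos in the literature that the paper flags.
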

\begin{proof}
    As ${\mathrm c}_1(K_S)=3H_S \in \HH^2(S, \QQ)$ and $H_S^2=315$ from Theorem \ref{thm:amerik and osy}, we compute
    that $K_S^2=2835$.  To simplify notation for this proof we denote by
    \begin{align*}
        \mathcal{E}&=\cQ_F^\vee,\\
        \mathcal{F}&=\Sym^2\cU_F.
    \end{align*}
    To compute $\chi(\OO_S)$ we compute first the Chern numbers of $K$ and $C$. For this we use the Harris--Tu formula
    \cite{harristu}, although we follow the notation of \cite{pragacz}. We denote the Segre polynomial 
    \[{\mathrm s}_t(\mathcal{E}-\mathcal{F}):=\sum {\mathrm s}_k(\mathcal{E}-\mathcal{F})t^k :=
    {\mathrm s}_t(\mathcal{E}){\mathrm c}_t(\mathcal{F})\] 
    where ${\mathrm s}_t(\mathcal{E}), {\mathrm c}_t(\mathcal{F})$ are the Segre and Chern polynomials of $\mathcal{E}$ and
    $\mathcal{F}$ respectively. Written in terms of the standard Schubert cycles $\sigma_i:={\mathrm c}_i(\cQ)$ on $\G(2,6)$
    we have
    \begin{align*}
        \sum {\mathrm s}_k(\mathcal{E}-\mathcal{F})t^k =&\hspace{3pt} 1 - 2\sigma_1t + (4\sigma_1^2-5\sigma_2)t^2 + (\sigma_1\sigma_2+\sigma_3)t^3 + \\
                         & \hspace{3pt}(2\sigma_2^2-4\sigma_1\sigma_3+2\sigma_4)t^4 +
                         (-4\sigma_2\sigma_3+4\sigma_1\sigma_4)t^5,
    \end{align*}
    and in what follows we denote by ${\mathrm s}_i:={\mathrm s}_i(\mathcal{E}-\mathcal{F})$. For a partition $I=(i_1,i_2,\ldots)$ we denote by
    \[{\mathrm s}_I(\mathcal{E}-\mathcal{F}):=\det\left[({\mathrm s}_{i_p-p+q})_{p,q}\right]\] so now \cite[Example 5.4]{pragacz} (note there are some typos fixed in
    a later paper) gives the following intersection numbers, all taking place on $F$, i.e., intersected with $[F]$ from
    \eqref{eq:class of F}
    \begin{align*}
    {\mathrm c}_2(C) & = {\mathrm s}_{(2,2)}(\mathcal{E}-\mathcal{F}) = ({\mathrm s}_2^2-{\mathrm s}_1{\mathrm s}_3) = 495, \\
    {\mathrm c}_1^2(C) & = ({\mathrm s}_{(3,1)}+{\mathrm s}_{(2,2)}) = ({\mathrm s}_1{\mathrm s}_3-{\mathrm s}_4) = -180, \\
    {\mathrm c}_1^2(K) & = ({\mathrm s}_{(1,1,2)}+{\mathrm s}_{(1,1,1,1)}) = ({\mathrm s}_1^4-3{\mathrm s}_1^2{\mathrm s}_2+2{\mathrm s}_1{\mathrm s}_3+{\mathrm s}_2^2-{\mathrm s}_4) = 315.
    \end{align*}
    From the tangent sequence of $S\subset F$ and the fact that $K_F=0$ we obtain \[3H_S={\mathrm c}_1(K_S)={\mathrm
    c}_1(N)=-2{\mathrm c}_1(K)+{\mathrm c}_1(C)\] from which ${\mathrm c}_1(K){\mathrm c}_1(C)=-315$ and hence ${\mathrm
    c}_2(N_{S/F})=1125$. On the other hand from the tangent sequence of $F\subset \G(2,6)$ 
    \[
        0\to T_F\to \cQ\otimes\cU^\vee \to \Sym^3\cU^\vee\to0
    \]
    we have ${\mathrm c}_2(T_F) = -3\sigma_1^2|_F+8\sigma_2|_F$,
    giving \[{\mathrm c}_2(T_S) = {\mathrm c}_2(T_F)[S]-{\mathrm c}_2(N_{S/F})-{\mathrm c}_1(T_S){\mathrm c}_1(N_{S/F})=2565.\] From the Noether formula we compute now
    \[\chi(\OO_S) = \frac{1}{12}({\mathrm c}_1(T_S)^2+{\mathrm c}_2(T_S))=450.\qedhere\]
\end{proof}
\begin{remark}
Using the fact that $S$ is isomorphic to $S'$ a section of the vector bundle
$\mathcal{E}=\pi^*\cQ_F^\vee\otimes\OO_{\PP(\Sym^2 \cU_F)}(1)$ on $\pi:\PP(\Sym^2 \cU_F)\to F$, we have from
\cite[p.54]{fp} the formula \[\chi_{\mathrm top}(S) = \int_S {\mathrm c}_{\mathrm top}(\mathcal{E}){\mathrm
c}(\mathcal{E})^{-1}{\mathrm c}(\PP(\Sym^2 \cU_F))\] which can also be used to compute $\chi(\OO_S)$. In fact, recently
Huybrechts \cite[Proposition 6.4.9]{huybrechts} has studied the ideal sheaf $I_{S'}$, proving that Sequence
\eqref{seq:amerik on S} on $S$ is 
\begin{align}\label{seq:huybrechts}
0\to L\to \Sym^2\cU_S\to \cQ_S^\vee\to N_{S/F}\otimes L\to 0
\end{align}
for a line bundle $L$ satisfying $-2L=2H_S$. From this one can, by taking Euler characteristics, also obtain that
$\chi(\OO_S)=450$. Studying cohomological vanishing on $\PP(\Sym^2\cU_F)$ he also obtains $h^1(S, \OO_S)=0$ like we do
in what follows.
\end{remark}

Our aim now is to compute $q=h^1(S,\OO_S)$ or $p_g$, noting that \[\chi(S, \OO_S)=1-q+p_g\] so one determines the other
from the above computation. This will be achieved by computing cohomology from Sequence \eqref{eq:resolution IS}.
As $F$ is the vanishing of a section of $\Sym^3\cU^\vee$, we can consider the Koszul resolution 
\begin{align}\label{koszul}
    0\to\wedge^4 \Sym^3\cU\to\cdots\to \Sym^3\cU\to\OO_{\G(2,6)}\to\OO_F\to0
\end{align}
from which it becomes clear that in order to compute groups such as \[\HH^i(F, \Sym^2\cU_F(H))\] we will need to compute
the groups \[\HH^i(\G(2,6), \wedge^p\Sym^3\cU\otimes(\Sym^2\cU)(H)),\] which was achieved using the Borel--Weil--Bott
Theorem in Section \ref{sec:grass}.

\begin{theorem}
    The Hodge numbers of $S$ are as follows
    \begin{align*}
        h^{1,0}&=q=0, \\
        h^{2,0}&=p_g=449,\\
        h^{1,1}&=1665.
    \end{align*}
    Also, $\Pic S \cong \mathrm{NS}(S)$ and $\Pic^\tau S\neq0$, i.e., $S$ has torsion in the N\'eron--Severi group and has
    non-trivial fundamental group.
\end{theorem}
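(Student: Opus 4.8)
The plan is to compute $q=h^1(S,\OO_S)$ by transporting everything to the Grassmannian. First I would feed the structure sequence
\[
0\to I_S\to\OO_F\to\OO_S\to 0
\]
into cohomology; since $F$ is hyperk\"ahler we have $\HH^1(F,\OO_F)=0$ and $\HH^2(F,\OO_F)\cong\CC$, so the relevant piece of the long exact sequence is
\[
\HH^1(F,\OO_F)\to \HH^1(S,\OO_S)\to \HH^2(F,I_S),
\]
and it suffices to show $\HH^2(F,I_S)=0$. The ideal sheaf is resolved by Amerik's sequence \eqref{eq:resolution IS}, which reduces $\HH^\bullet(F,I_S)$ to $\HH^\bullet(F,\Sym^2\cU_F(-2H))$ and $\HH^\bullet(F,\cQ_F^\vee(-2H))$; each of these is in turn computed by restricting the Koszul resolution \eqref{koszul} of $\OO_F$ and running the associated hypercohomology spectral sequence, whose $E_1$-page is assembled from the Borel--Weil--Bott outputs of Proposition \ref{prop:coh grass} in the case $t=-2$.

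The key observation is that I never need to evaluate a single spectral-sequence differential. A direct inspection of the $t=-2$ tables of Proposition \ref{prop:coh grass} shows that the only nonzero entries occur in cohomological degrees $5$ and $8$ and at exterior powers $\wedge^k\Sym^3\cU$ with $k\geq 1$; since the Koszul complex contributes $\HH^q(\G(2,6),\wedge^k\Sym^3\cU\otimes\cG)$ to $\HH^{q-k}(F,\cG|_F)$, all contributions land in degrees $4$ and $5$. Consequently both $\HH^\bullet(F,\Sym^2\cU_F(-2H))$ and $\HH^\bullet(F,\cQ_F^\vee(-2H))$ vanish in every degree $\leq 3$, and the long exact sequence of \eqref{eq:resolution IS} forces $\HH^i(F,I_S)=0$ for $i\leq 2$. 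In particular $\HH^2(F,I_S)=0$, hence $q=h^1(S,\OO_S)=0$ and $h^{1,0}=0$. The remaining Hodge numbers are then formal: from $\chi(\OO_S)=1-q+p_g=450$ I obtain $p_g=449$, and combining $\chi_{\mathrm{top}}(S)={\mathrm c}_2(T_S)=2565$ with $\chi_{\mathrm{top}}(S)=2-4q+2p_g+h^{1,1}$ gives $h^{1,1}=1665$.

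For the Picard statement, $q=h^1(S,\OO_S)=0$ means $\Pic^0 S=0$, so $\Pic S\cong\mathrm{NS}(S)$; moreover the exponential sequence identifies $\Pic^\tau S$ with the torsion subgroup of $\HH^2(S,\ZZ)$. To exhibit a nonzero torsion class I would invoke Huybrechts' refinement \eqref{seq:huybrechts}, whose kernel line bundle $L$ satisfies $-2L=2H_S$. Setting $\eta:=L\otimes\OO_S(H_S)$ we then have $\eta^{\otimes 2}\cong\OO_S$ and, since $2\,{\mathrm c}_1(L)=-2H_S$ forces ${\mathrm c}_1(L)=-H_S$ in the torsion-free group $\HH^2(S,\bQ)$, also ${\mathrm c}_1(\eta)=0$ rationally. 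Thus $\eta$ is automatically a $2$-torsion class, and everything comes down to showing $\eta\not\cong\OO_S$.

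This last nontriviality is the step I expect to be the main obstacle: because ${\mathrm c}_1(L)=-H_S$ is already pinned down rationally, no numerical invariant can separate $\eta$ from $\OO_S$, so a genuinely finer argument is required. One concrete attack is to compute $h^0(S,\eta)$: a nontrivial $2$-torsion bundle has no global sections while $\OO_S$ has $h^0=1$, so it would be enough to prove $h^0(S,\eta)=0$. Via the inclusion $L\hookrightarrow\Sym^2\cU_S$ of \eqref{seq:huybrechts} twisted by $H_S$, this reduces to a sections vanishing for $\Sym^2\cU_S(H_S)$, which is in principle accessible through the same Koszul/Borel--Weil--Bott package (note that $\HH^\bullet(\G(2,6),\Sym^2\cU(H))=0$ in the case $t=1$), though the passage from $F$ to $S$ needs care since sections may jump. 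Equivalently, one may argue topologically: a nonzero $\eta$ is the same datum as a connected \'etale double cover of $S$, that is, a nonzero element of order $2$ in $\pi_1(S)^{\mathrm{ab}}$, in agreement with Theorem A. Proving that this double cover is connected is precisely the delicate point, and I expect it to require either the explicit geometry of second-type lines or a monodromy computation.
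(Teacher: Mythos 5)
Your computation of the Hodge numbers is correct and follows essentially the paper's route: the same Koszul/Borel--Weil--Bott reduction, Amerik's resolution \eqref{eq:resolution IS}, and the structure sequence of $S$ in $F$. Your observation that no spectral-sequence differential needs to be evaluated is a genuine (small) streamlining: since every nonzero $E_1$-term for $t=-2$ sits in total degree $4$ or $5$, the vanishing $\HH^i(F,I_S)=0$ for $i\leq2$ is immediate, whereas the paper additionally pins down the exact dimensions in degree $4$ (via surjectivity of $d_{-4,8}$), which are not needed for $q=0$. The derivations of $p_g=449$ and $h^{1,1}=1665$ are fine.

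The genuine gap is in the torsion statement, and you have correctly diagnosed where it lies but not closed it. You reduce $\Pic^\tau S\neq0$ to exhibiting a nontrivial $2$-torsion line bundle and identify the candidate $\eta=L\otimes\OO_S(H_S)$ from \eqref{seq:huybrechts}, but the nontriviality $\eta\not\cong\OO_S$ is exactly the content of the claim, and the routes you sketch do not establish it: the vanishing of $\HH^\bullet(\G(2,6),\Sym^2\cU(H))$ does not yield $h^0(S,\Sym^2\cU_S(H_S))=0$, since one must pass through both the Koszul resolution and the ideal-sheaf resolution, and already $h^2(F,\Sym^2\cU_F(H))=36\neq0$; likewise, connectedness of the associated double cover is a restatement of the problem, not a proof. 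The paper closes this by a different, self-contained comparison that uses the $t=1$ computations of Proposition \ref{prop:coh grass} for a purpose you do not exploit: twisting \eqref{eq:resolution IS} by $3H$ gives $0\to\Sym^2\cU_F(H)\to\cQ_F^\vee(H)\to I_S(3H)\to0$, whence $h^3(F,I_S(3H))=0$, and combined with Kodaira vanishing for $\OO_F(3H)$ this forces $h^2(S,\OO_S(3H_S))=0$. Since $h^2(S,K_S)=h^0(S,\OO_S)=1$ by Serre duality, the classes $K_S$ and $3H_S$, though equal in $\HH^2(S,\QQ)$ by Theorem \ref{thm:amerik and osy}, are not linearly equivalent; as $\Pic S=\mathrm{NS}(S)$, their difference is a nonzero torsion class. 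You should substitute this argument (or supply an independent proof that Huybrechts' double cover of Remark \ref{rem:2-torsion} is connected) to complete the proof.
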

\begin{proof}
    Tensoring Sequence \eqref{koszul} with $\Sym^2\cU(tH)$ and $\cQ^\vee(tH)$ and using the hypercohomology
    spectral sequence \cite[B.1.5]{lazarsfeldv1} we obtain the following second quadrant spectral sequences
    \begin{align*}
        E_1^{p,q}&=\HH^q(\wedge^{-p}\Sym^3\cU \otimes (\Sym^2\cU)(tH)) \implies \HH^{p+q}(F, \Sym^2\cU_F(tH)),\\
        E_1^{p,q}&=\HH^q(\wedge^{-p}\Sym^3\cU\otimes\cQ^\vee(tH)) \implies \HH^{p+q}(F, \cQ^\vee_F(tH)).
    \end{align*}
    From Proposition \ref{prop:coh grass} for $t=-2$ and the first spectral sequence, we have that
    \[d_{-4,8}:E_1^{-4,8}\to E_1^{-3,8}\]
    is the only non-trivial differential between the only two non-trivial terms of the $E_1$-page. Since $\HH^5(F,
    (\Sym^2\cU_F)(-2H))=0$ as $\dim F=4$, it must be that $E_\infty^{-3,8}=0$ and so that $d_{-4,8}$
    is surjective. This gives that $E_\infty^{-4,8}=E_2^{-4,8}\cong\CC^{1008}$ and hence that
    $\HH^4((\Sym^2\cU_F)(-2H))=\CC^{1008}$ is the only non-zero cohomology group of this sheaf. Similarly, the second
    spectral sequence gives that
    \[\HH^i(F, \cQ_F^\vee(-2H) = \begin{cases}
        \CC^{561}, &\text{ if }i=4,\\
        0, &\text{ otherwise}.
     \end{cases}
    \]
    From Sequence \eqref{eq:resolution IS} we obtain now immediately that \[\HH^i(F, I_S)=0\text{ for } i\leq2.\] The
    sequence \[0\to I_S\to\OO_F\to\OO_S\to0\] and the fact that $h^i(F,\OO_F)$ is $1,0,1,0,1$ for $i=0,\ldots,4$
    respectively give that $h^1(S,\OO_S)=0$. From $450=\chi(S,\OO_S)=1-q+p_g$ we immediately obtain $p_g=449$. As
    $h^{1,0}=h^{0,1}=0$, so are the Betti numbers $b_1=b_3=0$. Since $S$ is connected, $b_0=b_4=1$. Note that
    $\chi_{\mathrm{top}}={\mathrm c}_2(T_S)=\sum (-1)^ib_i=2565$, giving that $b_2=2563$ and hence from the Hodge
    decomposition and Hodge duality that $h^{1,1}=b_2-2h^{2,0}=1665$.

    For $t=1$, the first spectral sequence and Proposition \ref{prop:coh grass} give
    $E_\infty^{-2,4}=E_1^{-2,4}\cong\CC^{36}$ as the only non-zero term. Hence
    \[
    h^2(F, (\Sym^2\cU)(H))=36
    \]
    is the only non-zero cohomology group of this sheaf. The second spectral sequence for $t=1$ gives that
    \[ h^0(F, \cQ_F^\vee(H))=20, \hspace{8pt} h^2(F, \cQ_F^\vee(H))=1 \]
    are the only two non-trivial cohomology groups. 
    
    The resolution of the ideal sheaf twisted by $3H$ 
    \[ 0\to(\Sym^2\cU_F)(H)\to\cQ_F^\vee(H)\to I_S(3H)\to 0\]
    and the computations above give that $h^3(F, I_S(3H))=0$. Kodaira vanishing gives $h^i(F, \OO_S(3H))=0$ for all $i\geq1$ so
    the sequence \[ 0\to I_S(3H)\to\OO_F(3H)\to \OO_S(3H)\to 0 \] induces $h^2(S, \OO_S(3H))=h^3(F, I_S(3H))$. If $K_S$
    and $3H$ were linearly equivalent and not just equal in the group $\HH^2(S,\QQ)$, then $1=h^2(S, K_S)=h^3(F,
    I_S(3H))$ which is a contradiction to the computation above giving $h^3(F, I_S(3H))=0$. 
    
    Since $q=h^1(S,\OO_S)=0$ is the tangent space to the abelian variety $\Pic^0 S$, this must be zero,
    giving $\Pic S= \mathrm{NS}(S)$. Since $3H$ and $K_S$ are cohomologically but not linearly equivalent, there must be
    torsion in cohomology, or in other words $\Pic^\tau S\neq0$.
\end{proof}

\begin{remark}\label{rem:2-torsion}
    In \cite[Remark 6.4.10]{huybrechts}, it is shown that there is a degree 2 \'etale cover of $S$ trivialising the
    above torsion element, which is, from \eqref{seq:huybrechts}, the difference $K_S-3H_S\in\Pic S$. This cover can be
    realised as the surface in $\PP(\cU_S)$ parametrising the two distinct ramification points of the Gauss map when
    restricted to a line.
\end{remark}

\section{The surface $V$ and its invariants}\label{sec:V}

Let $X\subset\PP^5$ be a smooth cubic, and denote by $V\subset F:=F(X)$ the surface of triple lines, i.e., lines
$\ell\subset X$ so that there exists a 2-plane so that $X\cap\PP^2=3\ell$. Denote also by $\tV\subset \mathrm{Bl}_{S}F$,
the strict transform of $V$. In \cite[4.3-4.4]{gk21} we prove that if $X$ is general, then $V$ is an irreducible surface
and $\tV$ is its smooth normalisation, and we prove that the class of $V$ in the cohomology of $F$ is
$21\rm{c}_2(\cU_F)$. In this section we will give a different geometric interpretation of $\tV$ than the one in
\cite{gk21} and use this to compute the class of $V$ again and some of the invariants of $\tV$. After setting up the
geometric construction, we will perform the computations using Macaulay2 as they are similar to the ones in previous
sections.

We will need the following construction, suggested to us by Kuznetsov. Let $\Fl=\Fl(2,3;6)$ be the 11-dimensional Flag variety
parametrising tuples $(\ell,\Pi)\in\G(2,6)\times\G(3,6)$ so that $\ell\subset\Pi$, and let $\cU_2\subset \cU_3$ be the
universal bundles on $\Fl$ and $L$ the kernel of the surjection $\cU_3^\vee\to \cU_2^\vee$. Denote by $E$ the rank 9
quotient of the following natural inclusion
\begin{align}\label{eq:defE}
0\to 3L\to \Sym^3\cU_3^\vee \to E\to 0,
\end{align}
which is a vector bundle as the iclusion of $3L$ is of full rank at every point.
The equation of the cubic $X$ induces a section $t:\OO_{\Fl}\to\Sym^3\cU_3^\vee$, and hence a section $s:\OO_{\Fl}\to
E$. Denote by $V(s)\subset \Fl$ the vanishing locus of this section. Note that 
\begin{align}\label{eq:sections}
\HH^0(\Fl,\Sym^3\cU_3^\vee)=\HH^0(\G(3,6),\Sym^3\cU_3^\vee)=\HH^0(\PP(\cU_3), \OO_{\PP(\cU_3)}(3))
\end{align}
by the usual Leray argument (in the latter two groups $\cU_3$ is now considered as the universal bundle on $\G(3,6)$), and these
vector spaces also agree with the 56-dimensional $\HH^0(\PP^5, \OO_{\PP^5}(3))$ since the pullback of $\OO_{\PP^5}(1)$ to
the universal family $\PP(\cU_3)$ is $\OO_{\PP(\cU_3)}(1)$.  As $\Sym^3\cU_3^\vee$ is globally
generated, so is $E$, so $V(s)$ has dimension 2 and a general section of $\OO_{\PP^5}(3)$ induces a section of $E$ whose
zero locus is generically reduced (see \cite[Lemma 5.2]{3264}).

Note that the set $V(s)\subset\Fl$ consists of pairs $(\ell,\Pi)$ so that $X\cap\Pi=3\ell$ or $\Pi\subset X$. To
see this, note that if $(\ell,\Pi)$ is already a zero of $t$ then the equation of $X$ vanishes on $\Pi$ from Equation
\eqref{eq:sections}.  For the remaining zeros of $s$, note that $L$ parametrises linear forms on $\cU_3$ which vanish on
$\cU_2$, so that from Sequence \eqref{eq:defE} such a point is an $(\ell,\Pi)$ so that $X\cap\Pi=3\ell$.

If $X$ is a general cubic, then $S$ is smooth and the blowup of $F$ at $S$ parametrises planes tangent to lines in $X$
as it is known (see \cite[Remark 2.2.19]{huybrechts}) that it is isomorphic to the incidence variety 
\[
\Bl_S(F)\cong\{(\ell,\Pi) : \Pi\cap X=2\ell+\ell'\} \subset \Fl \subset \G(2,6)\times\G(3,6).
\]
Under the genericity assumption, $X$ does not contain any $\PP^2$'s and $V(s)$ is necessarily reduced, so the discussion
above gives.
    
\begin{proposition}\label{kuznetsov}
If $X$ is a general cubic, then $V(s)$ is isomorphic to $\tilde{V}$.
\end{proposition}

We give now another proof of the following fact, using the above construction, that was obtained by a different
geometric construction in \cite[Theorem 4.7]{gk21}.

\begin{lemma}
    The class of $V$ in the cohomology of $F$ is given by \[[V]=21\mathrm{c}_2(\cU_F).\]
\end{lemma}
\begin{proof}
    This can be obtained as a consequence of the construction of Proposition \ref{kuznetsov}, and as it involves
    Schubert calculus computations very similar to the ones of sections above, we perform it directly in Macaulay2 in
    the following code, which sets up $E, \cU_2, \cU_3$ etc, computes the class of $V(s)$ in $\Fl$ as the top Chern class
    of $E$, pushes it forward to the Grassmannian $\G(2,6)$, and then compares it with $21\mathrm{c}_2(\cU_F)$:
    \small{
    \begin{verbatim}
loadPackage "Schubert2";
G=flagBundle({2,4}); (U,Q)=G.Bundles; c1=chern_1 U; c2=chern_2 U;
F=flagBundle({2,1,3},6); U2=(F.SubBundles)_1; U3=(F.SubBundles)_2;
E=(symmetricPower(3, dual U3)) - (symmetricPower(3, dual (U3-U2)));
c2UF=((18*c1^2*c2+9*c2^2)*c2);
(map(G, F))_*(chern(9,E))==21*c2UF
    \end{verbatim}
    \vspace*{-\dimexpr2\baselineskip + \topsep + \partopsep}
     \qedhere
    }
\end{proof}

Note that as $V(s)$ is the vanishing of a section of the vector bundle $E$, its ideal sheaf has a Koszul resolution
\[0\to\wedge^9E^\vee\to\ldots\to\wedge^2E^\vee\to E^\vee\to\OO_{\Fl}\to\OO_{V(s)}\to0.\]
Computing using Grothendieck--Riemann--Roch and Schubert calculus we obtain that
\[\chi(\OO_{\tV})=1071,\]
e.g., via the following Macaulay2 code
\begin{verbatim}
sum(10, i -> (-1)^i*(chi exteriorPower(i, dual E)))
\end{verbatim}
On the other hand, as the normal bundle of $\tV$ in $\Fl$ is given by $E|_{\tV}$ (as $\tV$ and $V(s)$ are isomorphic), we
can compute that $K_{\tilde{V}}=3H$, for $H$ the pullback of the Pl\"ucker polarisation restricted to $V\subset F\subset
\G(2,6)$, using
\begin{verbatim}
KtV=chern_1 (cotangentBundle F) + chern_1 E
\end{verbatim}
which we also computed differently in \cite[Proposition 4.6]{gk21} by expressing $\tV$ as a section of a rank two bundle
in $\Bl_S(F)$. We can now easily compute $K_{\tV}^2=8505$ as follows
\begin{verbatim}
integral ((chern_9 E)*(KtV)^2)
\end{verbatim}

What remains in terms of the invariants of $\tilde{V}\cong V(s)$ are the geometric genus $p_g$ and the irregularity $q$,
which satisfy $p_g-q=1070$. As $E$ involves indecomposable bundles on the Flag variety, the Borel--Weil--Bott
computations necessary to compute either of these invariants is much more involved. Nevertheless, very recently, Mboro
\cite{mboro} computed that $p_g=1070$ and $q=0$ by computing the Hodge numbers of the Fano scheme of 2-planes in the
cyclic cover cubic 5-fold associated to $X$ and proving this is an \'etale 3-1 cover of $\tilde{V}$, so all the Hodge
number of $\tilde{V}$ are now also known.

\section{A bound on the degree of irrationality of $F$}\label{sec:irr}

We recently proved in \cite{irrgk} that if $Y\subset\PP^4$ is a smooth cubic threefold and $F(Y)$ its Fano surface of
lines, then \textit{the degree of irrationality} $\irr(F(Y))$, i.e., the minimal degree of a dominant rational map
$F(Y)\dashrightarrow\PP^2$, satisfies
\[\irr(F(Y))\leq 6,\]
with equality if $Y$ is general. In this section we extend the construction of a degree 6 map to the Fano scheme of
lines of any smooth cubic hypersurface. Whether this upper bound is optimal for a general hypersurface remains to be
proven.

We recall first the construction in the case of threefolds, and elaborate on the linear system it is induced by.
\begin{lemma}
Let $Y\subset\PP^4$ be a smooth cubic threefold and $F(Y)\subset\Gr(2,5)\subset\PP:=\PP(\wedge^2\CC^5)=\PP^9$ its Fano
surface of lines. For any hyperplane $H\subset\PP^4$ there is a degree $6$ rational map
\[\phi:F\dashrightarrow Y\cap H\] which is the restriction of the rational map $\psi:\PP\dashrightarrow H$ given by the
sublinear-system $V\subset|\cO_\PP(1)|$ of sections corresponding to Schubert cycles $\sigma_1(\Lambda)$ for $\Lambda$ a
hyperplane in $H$. 
\end{lemma}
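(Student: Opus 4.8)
The plan is to identify $\psi$ explicitly as the linear projection sending a line to its intersection point with $H$, and then read off the degree of $\phi$ as the number of lines of $Y$ through a general point. Fix a basis $e_0,\dots,e_4$ of $\CC^5$ so that $H=\PP(W)$ with $W=\langle e_0,e_1,e_2,e_3\rangle$, and decompose $\wedge^2\CC^5=\wedge^2 W\oplus(W\wedge e_4)$. Writing a line $\ell=\PP\langle v_1,v_2\rangle$ with $v_i=w_i+c_i e_4$, $w_i\in W$, a direct computation gives $v_1\wedge v_2=w_1\wedge w_2+(c_2w_1-c_1w_2)\wedge e_4$, so projection of the Pl\"ucker point onto $W\wedge e_4\cong W$ recovers exactly $\ell\cap H=[c_2w_1-c_1w_2]$. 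Thus the linear projection $\PP(\wedge^2\CC^5)\dashrightarrow\PP(W\wedge e_4)=H$ away from the center $\PP(\wedge^2 W)=\Gr(2,W)$ (the lines contained in $H$) is the map $\ell\mapsto\ell\cap H$.

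It then remains to check that this projection is cut out by precisely the linear system $V$. A hyperplane $\Lambda\subset H$ is $\PP(A)$ for a $3$-dimensional $A\subset W$, and the Schubert divisor $\sigma_1(\Lambda)$ is the zero locus of the functional $L_A\colon\wedge^2\CC^5\to\wedge^2(\CC^5/A)\cong\CC$. Choosing $\beta\in W^*$ with $A=\ker\beta$, I would compute as above that $L_A(v_1\wedge v_2)=\beta(c_2w_1-c_1w_2)=\beta(\ell\cap H)$. Hence, as $\Lambda$ (equivalently $\beta$) varies over the hyperplanes of $H$, the sections $\sigma_1(\Lambda)$ are exactly the four coordinate functionals on $W\cong W\wedge e_4$, so $V$ is $4$-dimensional and its associated map is the projection $\psi$ above with target $\PP(W)=H$.

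Restricting to $F$, every $\ell\in F$ satisfies $\ell\subset Y$, hence $\ell\cap H\in Y\cap H$, so $\phi=\psi|_F$ lands in the cubic surface $Y\cap H$. For a general point $p\in Y\cap H$ the fibre $\phi^{-1}(p)$ consists of the lines $\ell\subset Y$ with $\ell\cap H=p$; since a general line meets $H$ in a single point and no line of $Y$ through a general $p$ lies in $H$, this is exactly the set of lines of $Y$ through $p$. The degree of $\phi$ is therefore the number of such lines, and in particular $\phi$ is dominant (the general point of $Y\cap H$ is hit) and generically finite.

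The substantive point, which I would treat most carefully, is the classical count that a general point of $Y$ lies on exactly $6$ lines of $Y$. Expanding the cubic equation $f$ in coordinates centred at $p\in Y$ as $f=f_1+f_2+f_3$, a line through $p$ with direction $v$ lies in $Y$ iff $f_1(v)=f_2(v)=f_3(v)=0$, which for general $p$ is a transverse complete intersection of a hyperplane, a quadric and a cubic in $\PP(T_p\PP^4)=\PP^3$, yielding $1\cdot2\cdot3=6$ reduced points. One must also ensure that a general point of $Y\cap H$ is general enough in $Y$ for this count: the locus in $Y$ over which the $6$ lines degenerate is a proper closed subset, and intersecting it with the surface $Y\cap H$ keeps it proper, so $6$ distinct lines pass through a general $p\in Y\cap H$. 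This identifies $\deg\phi=6$.
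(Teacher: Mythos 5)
Your proof is correct and follows essentially the same route as the paper's: identify $\psi$ with the linear projection from the span of $\Gr(2,H)$ (which is your $\PP(\wedge^2 W)$), check that the sections $\sigma_1(\Lambda)$ cut out exactly that projection, and read off the degree from the six lines of $Y$ through a general point. The only difference is that you verify by explicit Pl\"ucker coordinates what the paper asserts synthetically, and you additionally supply the classical $1\cdot 2\cdot 3=6$ count together with the (mild) genericity check that a general point of $Y\cap H$ lies on six distinct lines, both of which the paper takes for granted.
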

\begin{proof}
    The map $\psi_{\Gr(2,5)}:\Gr(2,5)\dashrightarrow H$ takes $[\ell]$ and gives $\ell\cap H\in\PP^4$. Consider now a
    $\Lambda\in|\cO_H(1)|$. Its pullback $\psi_{\Gr(2,5)}^*\Lambda$, which corresponds to lines meeting $\Lambda$, is of
    class $\sigma_1$ and so a section of the Pl\"ucker line bundle $\cO_{\Gr(2,5)}(1)$. Observe that this section
    contains all lines contained inside $H$. In other words, if 
    \[V=|\psi_{\Gr(2,5)}^*\cO_H(1)|\subset|\cO_{\Gr(2,5)}(1)|,\]
    then the base locus $\mathrm{Bs}(V)$ is equal to $\Gr(2,H)$. Projecting now from the $\PP^5$ which is the span of
    $\Gr(2,H)$ in $\PP$
    onto $\PP^3$ we obtain the map $\psi$ whose restriction to $\Gr(2,5)$ is $\psi_{\Gr(2,5)}$. The map $\phi$ has
    degree 6 as there are 6 lines through a general point of $Y$.
\end{proof}
\begin{remark}
In particular, $\psi$ is the projection from the $\PP^5\subset\PP$ containing the Pl\"ucker embedding of
$\Gr(2,H)=\Gr(2,4)$.
\end{remark}

\begin{proposition}
Let $X\subset\PP^{n+1}$ be a smooth cubic hypersurface for $n\geq3$ and $F=F(X)\subset\Gr(2,n+2)$ its Fano scheme of
lines. Then \[\irr(F)\leq6.\] More precisely, we have a degree 6 rational map
\[\phi:F\dashrightarrow R\times Y\]
where $Y=X\cap H$, for $H=\PP^n$, is a hyperplane section of $X$ with one node and hence rational and
$R\cong\PP^{n-3}\subset\PP^{n+1}$ is general. The map $\phi$ is the restriction of the map
\[ (\alpha,\beta):\PP:=\PP(\wedge^2\CC^{n+2})=\PP^{\frac{n(n+3)}{2}}\dashrightarrow R\times H\]
where $\beta$ is given by the $n+1$ sections of $\cO_{\PP}(1)$ cutting out the projective space
$\PP(\wedge^2\CC^{n+1})\subset\PP$ containing the Pl\"ucker embedding of $\Gr(2, H)$ and $\alpha$ is given by the space
of sections of $\cO_{\PP}(1)$ which correspond to Schubert cycles 
\[\sigma_1(T) = \{\ell\in\Gr(2,n+2) : \ell\cap\langle T,\Pi\rangle\neq\emptyset\}\]
for some fixed $\Pi\cong\PP^2$ and $T$ runs over all hyperplanes in $R$.
\end{proposition}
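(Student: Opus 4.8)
The plan is to identify the geometric meaning of the two maps $\alpha$ and $\beta$, to check that the resulting map lands in $R\times Y$, and then to compute its degree fibrewise through the lines passing through a point.

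First I would unwind the linear algebra. Writing $\CC^{n+2}=\CC^{n+1}\oplus\CC e$ with $H=\PP(\CC^{n+1})$, one checks directly on Pl\"ucker coordinates that the projection of $\PP=\PP(\wedge^2\CC^{n+2})$ away from the linear subspace $\PP(\wedge^2\CC^{n+1})$ sends a line $\ell$ to the point $\ell\cap H$, so that $\beta(\ell)=\ell\cap H$; since $\ell\subset X$ forces $\ell\cap H\in X\cap H=Y$, the map $\beta|_F$ indeed lands in $Y$. For $\alpha$, a point $r\in R$ is determined by the hyperplanes $T\subset R$ containing it, and by construction the $T$-coordinate of $\alpha(\ell)$ vanishes exactly when $\ell$ meets $\langle T,\Pi\rangle$. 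Projecting from $\Pi$ via $\pr_\Pi\colon\PP^{n+1}\dashrightarrow\PP^{n-2}$ and using that a general line does not meet the plane $\Pi$, one sees that $\langle T,\Pi\rangle=\pr_\Pi^{-1}(\pr_\Pi(T))$, so $\ell$ meets $\langle T,\Pi\rangle$ if and only if the line $\pr_\Pi(\ell)$ meets the hyperplane $\pr_\Pi(T)$ of $\pr_\Pi(R)\cong\PP^{n-3}$; hence $\alpha(\ell)=\pr_\Pi(\ell)\cap\pr_\Pi(R)\in R$. In particular $\alpha$ and $\beta$ record independent data, and $\phi=(\alpha,\beta)|_F\colon F\dashrightarrow R\times Y$ is a rational map between varieties of the same dimension $2(n-2)=(n-3)+(n-1)$.

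Next I would compute the degree fibrewise through $\beta$. The fibre $\beta^{-1}(y)$ over a general $y\in Y$ consists of the lines $\ell\subset X$ with $y\in\ell$; in an affine chart these are the directions $[v]\in\PP^n$ with $\langle y,v\rangle\subset X$, cut out by the three conditions coming from the vanishing of $f(sy+tv)$ in $s,t$ (using $f(y)=0$), which have degrees $1,2,3$ in $v$. Thus $C_y:=\beta^{-1}(y)$ is a complete intersection of type $(1,2,3)$ in $\PP^n$, of dimension $n-3$ and degree $6$, smooth for general $y$. It remains to compute the degree of $\alpha|_{C_y}\colon C_y\dashrightarrow R$. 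Fixing $n-3$ general hyperplanes $T_1,\dots,T_{n-3}\subset R$ through a general $r$, so that $\bigcap_iT_i=\{r\}$, the condition $\alpha(\ell)=r$ is equivalent to $\ell$ meeting each $\langle T_i,\Pi\rangle\cong\PP^{n-1}$; as a general $y$ lies on none of these $\PP^{n-1}$'s, each incidence is a single linear condition on $[v]$, i.e. a hyperplane $L_i$ in the $\PP^n$ of directions. Hence the fibre of $\phi$ over $(r,y)$ is $C_y\cap L_1\cap\cdots\cap L_{n-3}$, of length $6\cdot 1^{n-3}=6$ by B\'ezout.

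Finally, to make $R\times Y$ rational I would take $H$ tangent to $X$ at a single general point, so that $Y=X\cap H$ is a cubic $(n-1)$-fold with one node; projection from the node exhibits $Y$ as rational, and $R\cong\PP^{n-3}$ is rational, so $R\times Y$ is rational. A dominant degree $6$ rational map $F\dashrightarrow R\times Y$ then gives $\irr(F)\leq 6\cdot\irr(R\times Y)=6$. The main obstacle is the degree computation: one must check that $C_y\cap L_1\cap\cdots\cap L_{n-3}$ is proper and reduced for general $(r,y)$, so that the B\'ezout number $6$ is the genuine number of preimages and $\phi$ is dominant and generically finite. In characteristic zero reducedness follows from generic smoothness once the intersection is shown to be zero-dimensional, which is a dimension count as $(r,y)$ varies; one also needs that a general $y\in Y$ remains general enough in $X$ for $C_y$ to be a smooth $(1,2,3)$ complete intersection despite $H$ being tangent. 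Confirming that the system $\alpha$ really cuts out the Schubert divisors $\sigma_1(\langle T,\Pi\rangle)$ and that $\beta$ is the asserted projection is the other delicate point, but these reduce to the direct Pl\"ucker computations indicated above.
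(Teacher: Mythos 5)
Your proposal is correct and follows essentially the same route as the paper: both define $\phi$ by $\ell\mapsto(R\cap\langle\ell,\Pi\rangle,\ \ell\cap Y)$ and compute the fibre over a general $(p,q)$ as the intersection of the degree-$6$ complete intersection $F_q$ of lines through $q$ with a linear space of complementary dimension (the paper uses the single $\PP^4=\langle p,q,\Pi\rangle$, you use $n-3$ hyperplanes $L_i$ which cut out the same space). Your extra verifications --- that $\alpha,\beta$ agree with the stated linear systems, and the properness/reducedness of the intersection --- are details the paper leaves implicit, and they check out.
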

\begin{proof}
    Let $Y=X\cap H$ be a hyperplane section with exactly one node. Note that by projecting from the node inside
    $H=\PP^n$, we obtain a birational map $Y\dashrightarrow\PP^{n-1}$.
    
    Fix now $R=\PP^{n-3}$ and $\Pi=\PP^2$ general inside $\PP^{n+1}$. We will construct a degree 6 map
    $\phi:F\dashrightarrow R\times Y$. Consider a general point $[\ell]\in F$. For the following two points
    \begin{eqnarray*}
        p_\ell&=&R\cap\langle \ell, \Pi\rangle\\
        q_\ell&=&\ell\cap Y,
    \end{eqnarray*}
    define now $\phi([\ell])=(p_\ell,q_\ell)$. For any $q\in X$, there is a subvariety $F_q\subset F$ of dimension $n-3$
    parametrising lines $[\ell]\in F$ so that $\ell$ passes through $q$. This variety $F_q$ in fact embeds in the
    original $\PP^{n+1}$ as a complete intersection of type $(1,1,2,3)$. Fix a $(p,q)\in\phi(F)$.
    The lines through $q$ are parametrised by the space $F_q$ we just described. Note now that, the points $[\ell]\in F_q$ so that
    $p=R\cap\langle\ell,\Pi\rangle$ are precisely the six points of the intersection $\langle p, q, \Pi\rangle\cap F_q$.
    In other words $\phi$ has degree six and we can compose with a birational map $R\times Y\dashrightarrow
    \PP^{2(n-2)}$ to obtain a degree six map $F\dashrightarrow\PP^{2(n-2)}$.
\end{proof}

\end{document}